\documentclass{scrartcl}
\usepackage{amsmath}
\usepackage{amsfonts}
\usepackage{graphicx}
\newcommand{\bbbn}{{\mathbb N}}
\newcommand{\bbbr}{{\mathbb R}}
\newtheorem{defi}{Definition}[section]
\newtheorem{lemm}[defi]{Lemma}
\newtheorem{rema}[defi]{Remark}
\newenvironment{proof}[1][Proof]{\noindent \emph{#1.} }{\hfill \ %
    \vrule width 0.5em height 0.5em depth 0em\par\medskip}
\title{An exact solver for simple ${\mathcal H}$-matrix systems}
\author{Steffen B\"orm and Jessica G\"ordes}
\date{\today}
\begin{document}
\maketitle
\begin{abstract}
\noindent
Hierarchical matrices (usually abbreviated ${\mathcal H}$-matrices)
are frequently used to construct preconditioners for systems of
linear equations.
Since it is possible to compute approximate inverses or $LU$ factorizations
in ${\mathcal H}$-matrix representation using only ${\mathcal O}(n \log^2 n)$
operations, these preconditioners can be very efficient.

Here we consider an algorithm that allows us to solve a linear
system of equations given in a simple ${\mathcal H}$-matrix format
\emph{exactly} using ${\mathcal O}(n \log^2 n)$ operations.
The central idea of our approach is to avoid computing the inverse
and instead use an efficient representation of the $LU$ factorization
based on low-rank updates performed with the well-known
Sherman-Morrison-Woodbury equation.
\end{abstract}

\section{Introduction}

Hierarchical matrices have been introduced in \cite{HA99,HAKH00} as
a technique for representing certain dense matrices in a data-sparse
and therefore efficient way.
The approach is related to the well-known multipole \cite{RO85,GRRO87}
and panel clustering \cite{HANO88,HANO89} techniques:
instead of approximating a smooth function by a degenerate expansion,
a matrix block is approximated by a low-rank matrix.
The algebraic approach offers the possibility to perform matrix
arithmetic operations efficiently and to treat general matrices.

Already the first papers on ${\mathcal H}$-matrices, e.g., \cite{HA99},
consider the question of solving linear systems of equations with
a system matrix given in ${\mathcal H}$-matrix form.
Until now, the standard approach has been to compute an approximation
of the inverse \cite{GRHA02} or at least an approximate $LU$ factorization
\cite{GR04,BE05}.
Combined with a well-chosen clustering strategy, particularly the $LU$
factorization can be very efficient and rivals algebraic multigrid
algorithms \cite{GRKRLE05}.

Still, even the most refined $LU$ factorization is based on the
${\mathcal H}$-matrix multiplication algorithm, and this algorithm
typically finds only approximations, although these approximations
can be arbitrarily accurate.

In this paper, we present an algorithm that solves a system of linear
equations given in a simple ${\mathcal H}$-matrix representation
\emph{exactly}, at least up to rounding errors introduced by floating
point arithmetic operations.
The algorithm is based on the $LU$ factorization, but while standard
algorithms form the Schur complement explicitly, we handle it implicitly
using the Sherman-Morrison-Woodbury formula.
Due to this approach, the local ranks are preserved, no truncation
to lower rank is required, and therefore the resulting decomposition
can be used to solve the system directly.

The algorithm can be split into two phases:
a setup step computes the quantities describing the factorization of
the matrix, and a solver step then solves the linear system.
The first step requires ${\mathcal O}(n \log^2 n)$ operations, where
$n$ is the matrix dimension, and has to be carried out only once for
a given matrix.
The second step requires only ${\mathcal O}(n \log n)$ operations
and computes the solution for a given right-hand side.

It should be mentioned that there are other algorithms for solving
similar problems:
if the matrix is hierarchically semi-separable, it is possible to
solve systems in ${\mathcal O}(n)$ operations \cite{CHGUPA06},
but this works only if the low-rank blocks are of a very special
nested structure, not for more general ${\mathcal H}$-matrices.

\section{Matrix structure and basic idea}

In order to keep the presentation of the basic ideas simple, we restrict
our attention to the simplest ${\mathcal H}$-matrix structure
\cite{HA99}:

%
%
\begin{defi}[${\mathcal H}$-matrix]
Let $n_0\in\bbbn$.
We let
\begin{equation*}
  {\mathcal H}_0 := \bbbr^{n_0\times n_0}
\end{equation*}
and define ${\mathcal H}$-matrices on higher levels inductively:
let $\ell\in\bbbn$ and $n_\ell := n_0 2^\ell$.
A matrix $A\in\bbbr^{n_\ell\times n_\ell}$ is an element of
${\mathcal H}_\ell\subseteq\bbbr^{n_\ell\times n_\ell}$ if and only if there
are matrices $A_1,A_2\in{\mathcal H}_{\ell-1}$ and vectors
$a_1,a_2,b_1,b_2\in\bbbr^{n_{\ell-1}}$
satisfying
\begin{equation}\label{hmatrix}
  A = \begin{pmatrix}
    A_1 & a_1 b_1^* \\
    a_2 b_2^* & A_2
  \end{pmatrix}.
\end{equation}
We call the set ${\mathcal H}_\ell$ the set of ${\mathcal H}$-matrices
on level $\ell$.
\end{defi}

Given an ${\mathcal H}$-matrix $A\in{\mathcal H}_\ell$ and a
right-hand side vector $z\in\bbbr^{n_\ell}$, we are interested in
finding $x\in\bbbr^{n_\ell}$ with
\begin{equation}\label{problem}
  A x = z.
\end{equation}
In general, this is only possible if $A$ is regular.
Since our algorithm uses a hierarchy of sub-problems to solve the
system, we require $A$ to have a more restrictive property:

%
%
\begin{defi}[Hierarchically regular]
Let $A\in{\mathcal H}_\ell$.
We call $A$ \emph{hierarchically regular} if it is regular and,
in case $\ell>0$, if the submatrices $A_1,A_2\in{\mathcal H}_{\ell-1}$
of its representation (\ref{hmatrix}) are also hierarchically regular.
\end{defi}

We can see that, e.g., positive definite matrices are
hierarchically regular, since all of their diagonal blocks are
positive definite and therefore regular.

Let $\ell\in\bbbn$.
If $A\in{\mathcal H}_\ell$ is hierarchically regular, its block $LU$
decomposition is given by
\begin{equation*}
  A = \begin{pmatrix}
    A_1 & a_1 b_1^*\\
    b_2 a_2^* & A_2
  \end{pmatrix}
  = \begin{pmatrix}
    I & \\
    b_2 a_2^* A_1^{-1} & I
  \end{pmatrix}
  \begin{pmatrix}
    A_1 & a_1 b_1^*\\
    & A_2 - b_2 a_2^* A_1^{-1} a_1 b_1^*
  \end{pmatrix},
\end{equation*}
and we can use this decomposition to solve the linear system
(\ref{problem}).
Note that, since $A$ is hierarchically regular, the matrices
$A$ and $A_1$ are regular, therefore the Schur complement
$A_2 - b_2 a_2^* A_1^{-1} a_1 b_1^*$ also has to be regular.

In order to make handling the Schur complement easier, we introduce
\begin{align}\label{c_def}
  c_A &:= (A_1^{-1})^* a_2, &
  \gamma_A &:= c_A^* a_1 = a_2^* A_1^{-1} a_1
\end{align}
and get
\begin{equation*}
  A = \underbrace{\begin{pmatrix}
    I & \\
    b_2 c_A^* & I
  \end{pmatrix}}_{=:L}
  \underbrace{\begin{pmatrix}
    A_1 & a_1 b_1^*\\
    & A_2 - \gamma_A b_2 b_1^*
  \end{pmatrix}}_{=:U}
  = L U.
\end{equation*}
Now we can consider solving the linear system by block forward
and backward substitution, i.e., we will solve
\begin{align*}
  L y &= z, &
  U x &= y.
\end{align*}
We split the vectors $x$, $y$ and $z$ into subvectors
$x_1,x_2,y_1,y_2,z_1,z_2\in\bbbr^{n_{\ell-1}}$ with
\begin{align}\label{subvectors}
  x &= \begin{pmatrix}
    x_1\\
    x_2
  \end{pmatrix}, &
  y &= \begin{pmatrix}
    y_1\\
    y_2
  \end{pmatrix}, &
  z &= \begin{pmatrix}
    z_1\\
    z_2
  \end{pmatrix},
\end{align}
and can write $Ly=z$ in the form
\begin{align*}
  \begin{pmatrix}
    I & \\
    b_2 c_A^* & I
  \end{pmatrix}
  \begin{pmatrix}
    y_1\\
    y_2
  \end{pmatrix}
  &= \begin{pmatrix}
    z_1\\
    z_2
  \end{pmatrix}, &
  y_1 &= z_1, &
  y_2 &= z_2 - b_2 c_A^* y_1.
\end{align*}
Solving $Ux=y$ for $x$ is a little more involved, since we have
\begin{align*}
  \begin{pmatrix}
    A_1 & a_1 b_1^*\\
    & A_2 - \gamma_A b_2 b_1^*
  \end{pmatrix}
  \begin{pmatrix}
    x_1\\
    x_2
  \end{pmatrix}
  &= \begin{pmatrix}
    y_1\\
    y_2
  \end{pmatrix}, &
  (A_2 - \gamma_A b_2 b_1^*) x_2 &= y_2, &
  A_1 x_1 &= y_1 - a_1 b_1^* x_2
\end{align*}
and have to find a way of solving both sub-problems efficiently.

In order to handle the first equation, we rely on the well-known
Sherman-Morrison-Woodbury equation \cite{SHMO50}.
In our case, it yields
\begin{equation*}
  \left( I + \frac{\gamma_A A_2^{-1} b_2 b_1^*}
                  {1 - \gamma_A b_1^* A_2^{-1} b_2} \right) A_2^{-1}
  = (A_2 - \gamma_A b_2 b_1^*)^{-1}.
\end{equation*}
We simplify the equation by introducing
\begin{align}\label{d_def}
  d_A &:= A_2^{-1} b_2, &
  \delta_A &:= \gamma_A b_1^* d_A = \gamma_A b_1^* A_2^{-1} b_2
\end{align}
and get
\begin{equation}\label{A2_inverse}
  \left( I + \gamma_A \frac{d_A b_1^*}{1-\delta_A} \right) A_2^{-1}
  = (A_2 - \gamma_A b_2 b_1^*)^{-1},
\end{equation}
and $x_2$ can be computed by first recursively finding
$\widehat x_2\in\bbbr^{n_{\ell-1}}$ with
\begin{equation*}
  A_2 \widehat x_2 = y_2
\end{equation*}
and then using the rank one correction
\begin{equation*}
  x_2 = \widehat x_2 + \gamma_A \frac{b_1^* \widehat x_2}
                                     {1 - \delta_A} d_A.
\end{equation*}
Once $x_2$ has been computed, we can proceed to recursively solve
\begin{equation*}
  A_1 x_1 = y_1 - a_1 b_1^* x_2
\end{equation*}
to determine $x_1$, and therefore the solution $x$.

Of course we also need efficient algorithms for computing the
auxiliary vectors $c_A$ and $d_A$ introduced in (\ref{c_def})
and (\ref{d_def}).
Since (\ref{c_def}) involves the inverse of the adjoint of $A_1$,
we require an algorithm for solving systems of the form
\begin{equation}\label{adj_problem}
  A^* x = z.
\end{equation}
Fortunately, we can use the $LU$ factorization to solve this
problem as well:
due to $A = L U$, we also have $A^* = U^* L^*$ and can solve
\begin{align*}
  U^* y &= z, &
  L^* x &= y
\end{align*}
by forward and backward substitution.
Using the subvectors defined in (\ref{subvectors}), the forward
substitution takes the form
\begin{align*}
  \begin{pmatrix}
    A_1^* & \\
    b_1 a_1^* & A_2^* - \bar\gamma_A b_1 b_2^*
  \end{pmatrix}
  \begin{pmatrix}
    y_1\\ y_2
  \end{pmatrix}
  &= \begin{pmatrix}
    z_1\\ z_2
  \end{pmatrix}, &
  A_1^* y_1 &= z_1, &
  (A_2^* - \bar\gamma_A b_1 b_2^*) y_2 &= z_2 - b_1 a_1^* y_1.
\end{align*}
We can compute $y_1$ by recursion and use the adjoint of
equation (\ref{A2_inverse}) to get
\begin{equation*}
  (A_2^*)^{-1}
  \left( I + \bar\gamma_A \frac{b_1 d_A^*}
                  {1-\bar\delta_A} \right)
  = (A_2^* - \bar\gamma_A b_1 b_2^*)^{-1},
\end{equation*}
and this allows us to compute $y_2$ in the form
\begin{align*}
  \widehat z_2 &= z_2 - b_1 a_1^* y_1, &
  \widehat y_2 &= \widehat z_2
     + \bar\gamma_A \frac{d_A^* \widehat z_2}{1-\bar\delta_A} b_1, &
  A_2^* y_2 &= \widehat y_2.
\end{align*}
Now we can turn our attention to the backward substitution to solve
\begin{align*}
  \begin{pmatrix}
    I & c_A b_2^*\\
    & I
  \end{pmatrix}
  \begin{pmatrix}
    x_1\\ x_2
  \end{pmatrix}
  &= \begin{pmatrix}
    y_1\\ y_2
  \end{pmatrix}, &
  x_2 &= y_2, &
  x_1 &= y_1 - c_A b_2^* x_2,
\end{align*}
which fortunately requires only inner products and linear combinations.

\section{Algorithm and complexity}

We have seen that we can compute the solution of the systems (\ref{problem})
and (\ref{adj_problem}) efficiently if we are able to solve sub-problems
involving the two diagonal blocks $A_1$ and $A_2$ and their adjoints.
Assuming that the auxiliary vectors $c_A$ and $d_A$ and the values $\gamma_A$
and $\delta_A$ have already been prepared, this leads to the algorithm
given in Figure~\ref{fi:solve}.

\begin{figure}[ht]
\begin{tabbing}
{\bf procedure} solve($A$, {\bf var} $x$);\\
{\bf begin}\\
\quad\= {\bf if} $\ell=0$ {\bf then}\\
\> \quad\= Solve directly\\
\> {\bf else begin}\\
\> \> $\alpha_1 \gets c_A^* x_1$;\quad
      $x_2 \gets x_2 - \alpha_1 b_2$\\
\> \> solve($A_2$, $x_2$)\\
\> \> $\alpha_2 \gets b_1^* x_2$;\quad
      $\alpha_3 \gets \gamma_A \alpha_2 / (1-\delta_A)$;\quad
      $x_2 \gets x_2 + \alpha_3 d_A$\\
\> \> $\alpha_4 \gets b_1^* x_2$;\quad
      $x_1 \gets x_1 - \alpha_4 a_1$\\
\> \> solve($A_1$, $x_1$)\\
\> {\bf end}\\
{\bf end}
\end{tabbing}
\caption{Solve the linear system: On entry, the vector $x$ contains the
  right-hand side of problem (\ref{problem}). Recursive solves and
  low-rank updates are used to replace it by the solution.
  We assume that the vectors $c_A$ and $d_A$ and the values $\gamma_A$
  and $\delta_A$ have already been prepared.}
\label{fi:solve}
\end{figure}

The algorithm is called with $x=z$ and overwrites the vector $x$
with the solution of system (\ref{problem}).
If $A\in{\mathcal H}_0$, the matrix can be considered small and we
can solve the system directly.
If $A\in{\mathcal H}_\ell$ for $\ell>0$, the recursive procedure
described in the previous section is used:
the first line corresponds to the forward substitution in $L$ and
overwrites $x_2$ by $y_2$.
In the second line, we recursively solve a linear system with the
matrix $A_2$ to overwrite $x_2$ by $\widehat x_2$.
In the third line, we perform the Sherman-Morrison-Woodbury update to get
the ``lower'' half $x_2$ of the solution vector.
In the fourth and fifth line, its ``upper'' half $x_1$ is computed
by first updating the right-hand side and then recursively solving
the remaining system.

The adjoint system (\ref{adj_problem}) can be solved in a similar
fashion by using $A^* = U^* L^*$ as described in the previous
section, this leads to the algorithm given in Figure~\ref{fi:solveadj}.

\begin{figure}[ht]
\begin{tabbing}
{\bf procedure} solveadj($A$, {\bf var} $x$);\\
{\bf begin}\\
\quad\= {\bf if} $\ell=0$ {\bf then}\\
\> \quad\= Solve directly\\
\> {\bf else begin}\\
\> \> solveadj($A_1$, $x_1$)\\
\> \> $\alpha_1 \gets a_1^* x_1$;\quad
      $x_2 \gets x_2 - \alpha_1 b_1$\\
\> \> $\alpha_2 \gets d_A^* x_2$;\quad
      $\alpha_3 \gets \bar\gamma_A \alpha_2 / (1 - \bar\delta_A)$;\quad
      $x_2 \gets x_2 - \alpha_3 b_1$\\
\> \> solveadj($A_2$, $x_2$)\\
\> \> $\alpha_4 \gets b_2^* x_2$;\quad
      $x_1 \gets x_1 - \alpha_4 c_A$\\
\> {\bf end}\\
{\bf end}
\end{tabbing}
\caption{Solve the linear system: On entry, the vector $x$ contains the
  right-hand side of adjoint problem (\ref{adj_problem}). Recursive solves
  and low-rank updates are used to replace it by the solution.
  We assume that the vectors $c_A$ and $d_A$ and the values $\gamma_A$
  and $\delta_A$ have already been prepared.}
\label{fi:solveadj}
\end{figure}

Both algorithms work only if the auxiliary vectors $c_A$ and $d_A$ and
the auxiliary values $\gamma_A$ and $\delta_A$ have already been
prepared.
Fortunately, computing $c_A$ for a matrix $A\in{\mathcal H}_\ell$
requires only solving the adjoint system for $A_1^*\in{\mathcal H}_{\ell-1}$,
and similarly $d_A$ can be computed by solving the system
for $A_2\in{\mathcal H}_{\ell-1}$.
This means that we can prepare these vectors by bootstrapping:
on level $\ell=0$, we do not require the vectors, but we may want
to prepare auxiliary structures for solving efficiently, e.g.,
by computing a suitable factorization of the matrix $A$.
On level $\ell=1$, we have to solve systems on level $\ell-1=0$ in order
to find $c_A$ and $d_A$, but this can be done directly.
Once the vectors on a level $\ell$ have been computed, we can
use them to compute the vectors on level $\ell+1$, until the maximal
level has been reached.
The resulting algorithm is given in Figure~\ref{fi:setup}.

\begin{figure}[ht]
\begin{tabbing}
{\bf procedure} setup($A$);\\
\quad\= {\bf if} $\ell=0$ {\bf then}\\
\> \quad\= Prepare $A$, e.g., compute its factorization\\
\> {\bf else begin}\\
\> \> setup($A_1$);\\
\> \> setup($A_2$);\\
\> \> $c_A \gets a_2$;\quad
      solveadj($A_1$, $c_A$)\\
\> \> $d_A \gets b_2$;\quad
      solve($A_2$, $d_A$)\\
\> \> $\gamma_A \gets c_A^* a_1$;\quad
      $\delta_A \gets \gamma_A b_1^* d_A$\\
\> {\bf end}\\
{\bf end}
\end{tabbing}
\caption{Setup phase: Prepare the vectors $c_A$ and $d_A$ and the
  values $\gamma_A$ and $\delta_A$ according to (\ref{c_def}) and
  (\ref{d_def}) for all submatrices.}
\label{fi:setup}
\end{figure}

Let us now investigate the complexity of the recursive algorithms.
If we denote the storage requirements of the representation
(\ref{hmatrix}) of $A\in{\mathcal H}_\ell$ by $M_\ell$, we find
\begin{align*}
  M_\ell &= \begin{cases}
    n_0^2 &\text{ if } \ell=0,\\
    2 M_{\ell-1} + 4 n_{\ell-1}
    = 2 M_{\ell-1} + 2 n_\ell &\text{ otherwise}
  \end{cases} &
  &\text{ for all } \ell\in\bbbn_0,
\end{align*}
and we can see that this implies
\begin{align*}
  M_\ell &= (2\ell+n_0) n_\ell &
  &\text{ for all } \ell\in\bbbn_0,
\end{align*}
i.e., if we assume $n_0$ to be constant, the storage requirements grow
like ${\mathcal O}(n_\ell \log n_\ell)$.
This is typical for most ${\mathcal H}$-matrix representations.

%
%
\begin{lemm}[Solving]
\label{le:solving}
Assume that there is a constant $C_0\in\bbbr_{>0}$ such that solving the
problems (\ref{problem}) and (\ref{adj_problem}) for level $\ell=0$ requires
not more than $C_0 n_0^2$ operations.

Then for all $\ell\in\bbbn_0$ and $A\in{\mathcal H}_\ell$, the algorithms
given in Figure~\ref{fi:solve} and Figure~\ref{fi:solveadj} require
not more than $(C_0 n_0 + 6\ell) n_\ell$ operations.
\end{lemm}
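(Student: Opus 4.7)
The natural plan is to prove the bound by induction on $\ell$, handling both algorithms uniformly since they share the same recursive structure. The base case $\ell=0$ is immediate from the hypothesis: the ``solve directly'' branch needs at most $C_0 n_0^2 = (C_0 n_0 + 6\cdot 0)\, n_0$ operations, matching the claim.

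For the inductive step, I would assume that at level $\ell-1$ both \texttt{solve} and \texttt{solveadj} obey the bound $(C_0 n_0 + 6(\ell-1)) n_{\ell-1}$, and then inspect Figure~\ref{fi:solve} line by line. Apart from the two recursive calls on $A_1$ and $A_2$, the procedure performs only inner products and rank-one updates on vectors of length $n_{\ell-1}$: specifically $\alpha_1 = c_A^* x_1$, $x_2 \mathrel{-}= \alpha_1 b_2$, $\alpha_2 = b_1^* x_2$, $x_2 \mathrel{+}= \alpha_3 d_A$, $\alpha_4 = b_1^* x_2$, $x_1 \mathrel{-}= \alpha_4 a_1$, plus two scalar operations for $\alpha_3$. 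Each of these vector operations costs at most $2n_{\ell-1}$ flops, so the non-recursive work is bounded by $12\, n_{\ell-1} + \mathcal{O}(1) = 6\, n_\ell + \mathcal{O}(1)$. Adding the recursive costs gives
\begin{equation*}
  2\,(C_0 n_0 + 6(\ell-1))\, n_{\ell-1} + 6\, n_\ell
  = (C_0 n_0 + 6(\ell-1))\, n_\ell + 6\, n_\ell
  = (C_0 n_0 + 6\ell)\, n_\ell,
\end{equation*}
using $n_\ell = 2 n_{\ell-1}$.

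The analysis for Figure~\ref{fi:solveadj} is entirely analogous: one checks that the non-recursive work again consists of exactly six vector operations on $\mathbb{R}^{n_{\ell-1}}$ plus a constant number of scalar operations, so the same bookkeeping applies. The only subtle point is that the constant $6$ in the bound is tight only if the small $\mathcal{O}(1)$ scalar work (the division forming $\alpha_3$) is absorbed into the constant $C_0 n_0$ factor, or into the slack from the $\mathcal{O}(1)$ length-$1$ floor. That is really the only obstacle: making sure every flop in the pseudocode fits under one of the six ``$2 n_{\ell-1}$'' slots without overflowing into a seventh. Once the operation count per non-recursive line is verified, the induction closes immediately.
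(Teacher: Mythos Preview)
Your approach is exactly the paper's: induction on $\ell$, line-by-line operation counting, and the recurrence $S_\ell = 2S_{\ell-1} + 6n_\ell$. The only loose end is the one you flag yourself, and your suggested fixes for it do not actually work. You cannot absorb the extra scalar operations into $C_0 n_0$, since $C_0$ is a \emph{given} constant, not one you may enlarge; and an additive $\mathcal O(1)$ term in the recurrence would propagate through all $2^\ell$ recursive calls and contribute an extra $\Theta(n_\ell/n_0)$ operations, which would break the stated constant $6$.

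The paper closes this cleanly by a sharper count: each inner product $c_A^*x_1$, $b_1^*x_2$, $b_1^*x_2$ costs $2n_{\ell-1}-1$ operations (one fewer addition than multiplications), while forming $\alpha_3 = \gamma_A\alpha_2/(1-\delta_A)$ costs $3$ scalar operations, not $2$. Thus the four scalar quantities cost $3(2n_{\ell-1}-1)+3 = 6n_{\ell-1}$ exactly, the three vector updates cost $3\cdot 2n_{\ell-1} = 6n_{\ell-1}$, and the non-recursive work is \emph{exactly} $12n_{\ell-1} = 6n_\ell$ with no stray constant. With that bookkeeping your induction closes as written.
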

\begin{proof}
We consider only the algorithm given in Figure~\ref{fi:solve}, since
both algorithms differ only in the sequence the elementary computation
steps are carried out.

We denote the number of operations required on level $\ell\in\bbbn_0$
by $S_\ell\in\bbbn$.

According to our assumption, the algorithm requires not more than
$C_0 n_0^2$ operations on level $\ell=0$, i.e., we have
\begin{equation*}
  S_0 \leq C_0 n_0^2.
\end{equation*}
Let us now consider a level $\ell>0$.
Computing $\alpha_1$, $\alpha_2$ and $\alpha_4$ each requires
$2n_{\ell-1}-1$ operations, while $\alpha_3$ is computed in $3$ operations,
giving us a total of $6n_{\ell-1}$ operations.
The updates of $x_2$ and $x_1$ each require $2 n_{\ell-1}$ operations,
giving us $6 n_{\ell-1}$ operations for all three updates.
Taking the two recursive solves into account, we get
\begin{equation*}
  S_\ell = 2 S_{\ell-1} + 12 n_{\ell-1}
  = 2 S_{\ell-1} + 6 n_\ell.
\end{equation*}
Now we can use a straightforward induction to prove
\begin{align*}
  S_\ell &\leq (C_0 n_0 + 6 \ell) n_\ell &
  &\text{ for all } \ell\in\bbbn_0,
\end{align*}
and this is the desired estimate.
\end{proof}

If we again assume $n_0$ to be constant, we can see that the number of
operations of the solution algorithm grows like
${\mathcal O}(n_\ell \log n_\ell)$, and this can be considered the
optimal complexity given that the storage requirements of the matrix
show the same asymptotic behaviour.

%
%
\begin{lemm}[Preparing]
Assume that there are constants $C_0,\widehat C_0\in\bbbr_{>0}$ such
that solving the problems (\ref{problem}) and (\ref{adj_problem}) for
level $\ell=0$ requires not more than $C_0 n_0^2$ operations and that
preparing, e.g., factoring, the matrix $A$ on this level requires not
more than $\widehat C_0 n_0^3$ operations.

Then for all $\ell\in\bbbn_0$ and $A\in{\mathcal H}_\ell$, the algorithm
given in Figure~\ref{fi:setup} requires not more than
$(\widehat C_0 n_0^2 + (C_0 n_0-1) \ell + 3 \ell^2) n_\ell$ operations.
\end{lemm}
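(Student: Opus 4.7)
The plan is to mimic the proof of Lemma~\ref{le:solving}: set up a recurrence for the operation count $P_\ell$ of the \emph{setup} procedure on level $\ell$, and then establish the claimed bound by induction on $\ell$. For the base case $\ell=0$, the algorithm only performs the preparation of $A$, so by assumption $P_0 \leq \widehat C_0 n_0^3 = \widehat C_0 n_0^2 \cdot n_0$, which matches the claimed bound with the $\ell$ and $\ell^2$ terms vanishing.

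For $\ell>0$ I would add up the cost of the individual steps in Figure~\ref{fi:setup}. The two recursive calls \emph{setup}$(A_1)$ and \emph{setup}$(A_2)$ contribute $2 P_{\ell-1}$. The two auxiliary triangular solves \emph{solveadj}$(A_1,c_A)$ and \emph{solve}$(A_2,d_A)$ each cost at most $(C_0 n_0 + 6(\ell-1)) n_{\ell-1}$ by Lemma~\ref{le:solving}, giving a combined contribution of $(C_0 n_0 + 6\ell - 6) n_\ell$. The scalar evaluations $\gamma_A = c_A^* a_1$ and $\delta_A = \gamma_A (b_1^* d_A)$ amount to two inner products of length $n_{\ell-1}$ plus one extra multiplication, totalling at most $4 n_{\ell-1} \leq 2 n_\ell$ operations. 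Summing everything yields
\begin{equation*}
  P_\ell \leq 2 P_{\ell-1} + (C_0 n_0 + 6\ell - 4) n_\ell.
\end{equation*}

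The induction step then reduces to verifying the algebraic identity
\begin{equation*}
  \bigl((C_0 n_0 - 1)\ell + 3\ell^2\bigr) - \bigl((C_0 n_0 - 1)(\ell-1) + 3(\ell-1)^2\bigr) = C_0 n_0 + 6\ell - 4,
\end{equation*}
since $2 P_{\ell-1} \leq (\widehat C_0 n_0^2 + (C_0 n_0 - 1)(\ell-1) + 3(\ell-1)^2) n_\ell$ after using $2 n_{\ell-1} = n_\ell$. The main obstacle is really just bookkeeping: making sure that the per-level overhead $C_0 n_0 + 6\ell - 4$ produced by the recurrence matches the discrete derivative in $\ell$ of the quadratic polynomial in the claimed bound. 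The appearance of the $3\ell^2$ term (compared to the linear-in-$\ell$ bound of Lemma~\ref{le:solving}) is the only qualitative novelty and is explained by the fact that the cost of the auxiliary solves itself grows linearly with the level, so summing over all levels produces a quadratic contribution.
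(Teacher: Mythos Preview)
Your argument is correct and follows the same route as the paper's proof: both derive the recurrence $P_\ell \leq 2 P_{\ell-1} + (C_0 n_0 + 6\ell - 4) n_\ell$ from Lemma~\ref{le:solving} plus the cost of the two inner products, and then close the induction using the identity you spelled out. The paper leaves the inductive verification implicit, whereas you make the discrete-derivative check explicit, but there is no substantive difference.
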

\begin{proof}
We denote the number of operations required on level $\ell\in\bbbn_0$
by $P_\ell\in\bbbn$.

According to our assumption, the algorithm requires not more than
$\widehat C_0 n_0^3$ operations on level $\ell=0$, i.e., we have
\begin{equation*}
  P_0 \leq \widehat C_0 n_0^3.
\end{equation*}
Let us now consider a level $\ell>0$.
Due to Lemma~\ref{le:solving}, computing the vectors $c_A$ and $d_A$
takes not more than $(C_0 n_0 + 6 (\ell-1)) n_{\ell-1}$ per vector.
$\gamma_A$ is computed using $2 n_{\ell-1}-1$ operations, and
$\delta_A$ is computed using $2 n_{\ell-1}$ operations.
This yields
\begin{align*}
  P_\ell &= 2 P_{\ell-1} + 2 (C_0 n_0 + 6(\ell-1)) n_{\ell-1}
                + 4 n_{\ell-1}-1\\
  &< 2 P_{\ell-1} + (C_0 n_0 + 6 \ell - 6) n_\ell
                + 2 n_\ell\\
  &= 2 P_{\ell-1} + 3 (2 \ell - 1) n_\ell + (C_0 n_0 - 1) n_\ell.
\end{align*}
Based on this bound, we can prove
\begin{align*}
  P_\ell &\leq (\widehat C_0 n_0^2 + (C_0 n_0-1) \ell + 3 \ell^2) n_\ell &
  &\text{ for all } \ell\in\bbbn_0
\end{align*}
by a simple induction, and this is the estimate we need.
\end{proof}

Once more assuming that $n_0$ is constant, the number of operations
required to prepare a matrix $A\in{\mathcal H}_\ell$ for the efficient
solver grows like ${\mathcal O}(n_\ell \log^2 n_\ell)$.
The additional logarithmic factor is introduced since each step of
the setup algorithm involves ${\mathcal O}(n_\ell \log n_\ell)$
operations in the solver steps.

%
%
\begin{rema}[Generalization]
The Sherman-Morrison-Woodbury formula can be extended to
matrix updates of rank $k$.
In this case the vectors $a_1,a_2,b_1,b_2$ in (\ref{hmatrix}) can be
replaced by matrices of dimension $n_{\ell-1}\times k$, the coefficients
$\gamma_A$ and $\delta_A$ become $k\times k$ matrices, and instead of
dividing by $1-\delta_A$, we have to solve a $k\times k$ system, but
otherwise the algorithm remains unchanged.

It is not clear if the algorithm can be extended to more general
matrix structures, e.g., those used for three-dimensional integral
equations, since this would mean that it is no longer possible
to treat the Schur complement by a simple low-rank update.
\end{rema}

\section{Numerical experiments}

Since our algorithm computes the exact solution of the problem
(\ref{problem}), we do not have to consider the accuracy of the
computed solution, we only have to investigate the runtime behaviour.
We consider a simple model problem:
$A$ is a symmetric tridiagonal matrix with the value $4$ on the
diagonal and random values between $-1$ and $1$ on the sub- and
superdiagonal.
By the Gershgorin circle theorem this guarantees that $A$ is positive
definite and therefore ${\mathcal H}$-regular, so our algorithm can
be applied.

%
%
\begin{figure}[ht]
\begin{center}
\includegraphics[angle=270,width=12cm]{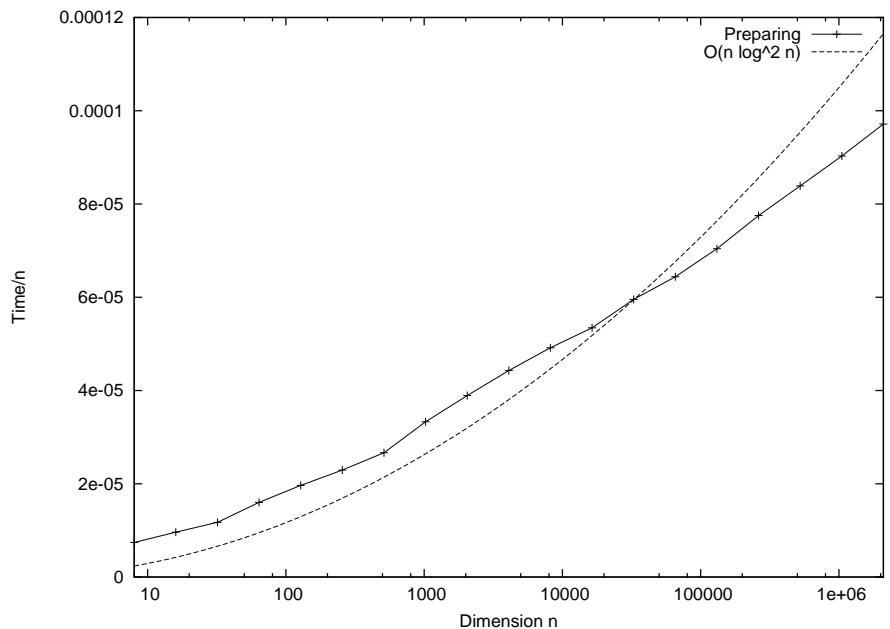}
\end{center}
\caption{Measured time (per degree of freedom) for preparing the
  decomposition}
\label{fi:preparing}
\end{figure}

We use $n_0=2$ and consider matrix dimensions up to $n_0 2^{20}=2097152$.
The runtime for preparing the decomposition is shown in
Figure~\ref{fi:preparing}:
the $x$-axis gives the dimension $n_\ell$ of the matrix in logarithmic
scale, the $y$-axis gives the time per degree of freedom.
We can see that the time grows like ${\mathcal O}(n_\ell \log^2 n_\ell)$,
as predicted by our theory.

%
%
\begin{figure}[ht]
\begin{center}
\includegraphics[angle=270,width=12cm]{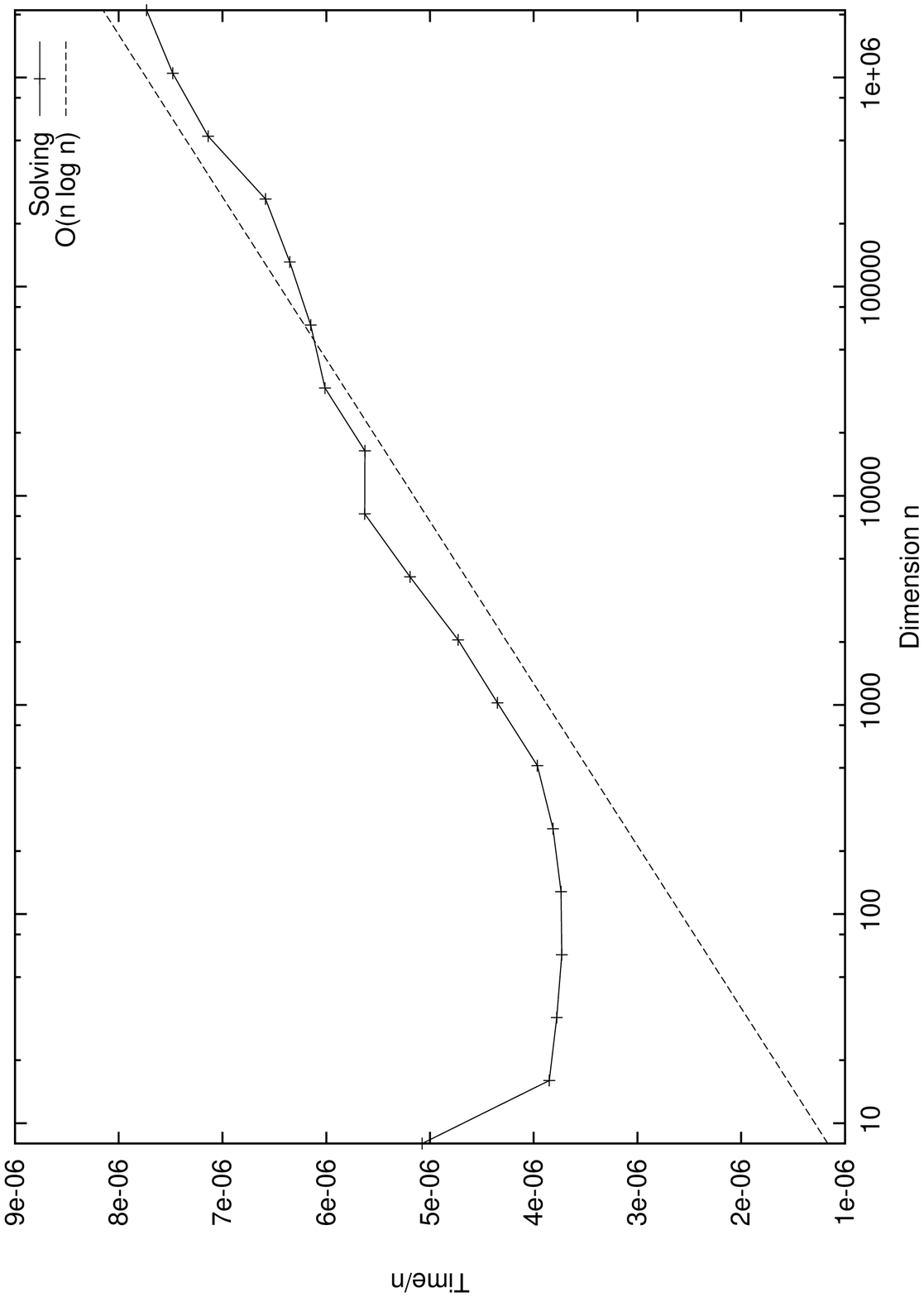}
\end{center}
\caption{Measured time (per degree of freedom) for solving the
  linear system}
\label{fi:solving}
\end{figure}

Figure~\ref{fi:solving} shows the runtime for solving the linear
system once the decomposition has been prepared.
We can see that the time grows like ${\mathcal O}(n_\ell \log n_\ell)$,
agreeing with our theoretical prediction.

\bibliographystyle{plain}
\bibliography{hmatrix}
\end{document}